\begin{document}
\newcommand{\np}{\addtocounter{problem}{-1}}
\newcommand{\C}{\mathbb{C}}
\newcommand{\R}{\mathbb{R}}
\newcommand{\Z}{\mathbb{Z}}
\newcommand{\N}{\mathbb{N}}
\newcommand{\aut}{\text{Aut}}
\title{Feasibly Reducing KAT Equations to KA Equations}
\author{James Worthington\\Mathematics Department, Cornell University \\ Ithaca, NY 14853-4201 USA \\ 
{\tt worthing@math.cornell.edu} }  
\maketitle
\theoremstyle{definition}
\newtheorem{problem}{Problem}
\newtheorem{rudin}{Rudin}

\begin{abstract}  
Kleene algebra (KA) is the algebra of regular events.  Familiar examples of Kleene algebras include regular sets, relational algebras, and trace algebras.  A Kleene algebra with tests (KAT) is a Kleene algebra with an embedded Boolean subalgebra.  The addition of tests allows one to encode {\tt while} programs as KAT terms, thus the equational theory of KAT can express (propositional) program equivalence.  More complicated statements about programs can be expressed in the Hoare theory of KAT, which suffices to encode Propositional Hoare Logic.

That the equational theory of KAT reduces to the equational theory of KA has been shown by Cohen et al.  Unfortunately, their reduction
involves an exponential blowup in the size of the terms involved.  Here we give an alternate feasible reduction.
\end{abstract}

\section {Introduction}

The class of Kleene algebras is defined by equations and equational implications over the signature $\{0, 1, +, \cdot,  ^*\}$.  Some well-known examples of Kleene algebras include relational algebras, trace algebras, and sets of regular languages (see \cite{bib:be03} for more examples and applications).  In fact, the set of regular languages over an alphabet $\Sigma$ is the free Kleene algebra on $\Sigma$.  That is, given two KA terms $\alpha$ and $\beta$, $\alpha = \beta$ modulo the axioms of Kleene algebra if and only if $\alpha$ and $\beta$ denote the same regular set \cite{bib:ko94}.  A Kleene algebra with tests is a Kleene algebra with an embedded Boolean subalgebra (the complementation function is only defined on Boolean terms).  

Adding tests allows the encoding of {\tt while} programs as KAT terms.  As a result, the equational theory of KAT suffices to express (propositional) equivalence of {\tt while} programs.  Moreover, Propositional Hoare Logic can be encoded in the Hoare theory of KAT (equational implications of the form $r = 0 \rightarrow p = q$), and furthermore the Hoare theory of KAT reduces efficiently to the equational theory of KAT.  Combining all of these reductions shows that the equational theory of KA can be used to express interesting properties of programs succinctly.  See \cite{bib:ko97}, \cite{bib:ko00}, and \cite{bib:ko01} for details.

In \cite{bib:ko96}, it is shown that the equational theory of KAT reduces to the equational theory of KA.  Unfortunately, the reduction used can increase the size of the terms involved exponentially.  We given alternate reduction, which increases the size of the terms by only a polynomial amount.  
This paper is organized as follows.  In section 2, we provide the relevant definitions and recall the encoding of finite automata as Kleene algebra terms.  In section 3, we prove some useful theorems of Kleene algebra used for reasoning about automata and give an overview
of {\it guarded string algebras}. In section 4, we give a feasible reduction from a KAT term to an automaton encoded as a KA term.
In section 5, we remark that the Hoare theory of KA(T) can be efficiently reduced to the equational theory of KA(T), and in section 6 we
make an observation concerning automata constructed from KAT terms representing deterministic {\tt while} programs.

\section{Background}
In this section, we describe our proof system and recall some useful facts about KA(T).  The axiomatization of Kleene algebra, results about matrices, and the encoding of automata as KA terms are from \cite{bib:ko94}.  The definition of KAT is from \cite{bib:ko97}.
 
\subsection{Equational Logic}

By ``proof'', we mean a sequent in the equational implication calculus. Let $\alpha, \beta, \gamma, \delta$ be terms in the
language of Kleene algebra. The equational axioms are:
$$
\begin{array}{l}
\alpha = \alpha\\
\alpha = \beta \rightarrow \beta = \alpha\\
\alpha = \beta \rightarrow \beta = \gamma \rightarrow \alpha = \gamma\\
\alpha = \beta \rightarrow \gamma = \delta \rightarrow \alpha + \gamma = \beta + \delta\\
\alpha = \beta \rightarrow \gamma = \delta \rightarrow \alpha \cdot \gamma = \beta \cdot \delta\\
\alpha = \beta \rightarrow \alpha^* = \beta^*.\\
\end{array}
$$

\noindent We consider these Horn formulas to be implicitly universally quantified.\\

\noindent Let $\Phi$ be a sequence of equations or equational implications, $e$ an equation, $\phi$ a Horn formula, and $\psi$ an equational axiom or an axiom of KA (given below).  Let $\sigma$ be a substitution of terms for variables.  The rules of inference are:

$$\vdash \sigma(\psi)~~~~~~~
e \vdash e ~~~~~~~~
\begin{array}{c}
\Phi \vdash \phi \\ \hline
\Phi,e \vdash \phi \\ \end{array}~~~~~~~~
\begin{array}{c}
\Phi,e \vdash \phi  \\ \hline
\Phi \vdash e \rightarrow \phi \\ \end{array}~~~~~~~~
\begin{array}{c}
\Phi \vdash e ~~~~~~~\Phi \vdash e \rightarrow \phi \\ \hline
\Phi \vdash \phi \\ \end{array},
$$

\noindent and the structural rules which allow us to treat a sequence of formulas as a set of formulas.  For a proof that this is a complete deductive system, see \cite{bib:se72}.  We also allow ``substitution of equals for equals''.  For example, from $a=b$, conclude
$c(a+1)=c(b+1)$ in one step.   

\subsection {Kleene Algebra}

We now state the axioms of Kleene algebra.  The first are the idempotent semiring axioms.  Note that we abbreviate $\alpha \cdot \beta$ as
$\alpha \beta$.

\begin{enumerate}
\item $(a + b) + c = a + (b + c)$
\item $a + b = b + a$
\item $a + 0 = a$
\item $a + a = a$
\item $(ab)c = a(bc)$
\item $1a = a1 = a$
\item $a(b + c) = ab + ac$
\item $(a + b)c = ac + bc$
\item $0a = 0a = 0$
\end{enumerate}

\noindent In any idempotent semiring, addition can be used to define a partial order: 
$$x \leq y \Leftrightarrow x + y = y.$$

\noindent For brevity, we add the symbol $\leq$ to the language.\\  

\noindent There are four axioms involving $^*$.  The equational axioms are:\\

10. $1 + xx^* = x^*$

11. $1 + x^*x = x^*$\\

\noindent There are also two equational implications:\\

12. $b + ax \leq  x \rightarrow a^* b \leq x$

13. $b + xa \leq x \rightarrow ba^* \leq x$\\

\noindent The equational implications guarantee unique least solutions to the linear inequalities
$$b + aX \leq X$$
$$b + Xa \leq X$$

\noindent in the presence of the other axioms.

\subsection {Kleene Algebra with Tests}

A Kleene algebra with tests is a Kleene algebra with an embedded Boolean subalgebra; Boolean terms are called tests.  Formally, a Kleene algebra with tests is a two-sorted structure $(K,B,+,\cdot,^*,^-,0,1)$ such that $(K,+,\cdot,^*,0,1)$ is a Kleene algebra and 
$(B,+,\cdot,^-,0,1)$ is a Boolean algebra.  Note that complementation is only defined on tests. 

We use the following axiomatization of Boolean algebra.  Let $b,c,d$ be Boolean terms.
\begin{enumerate} 
\item KA axioms 1 - 9 
\item $\overline{0} = 1;~ \overline{1} = 0$
\item $b + 1 = 1$
\item $b\overline{b} = \overline{b}b= 0$
\item $\overline{\overline{b}} = b$
\item $bb=b$
\item $\overline{b + c} = \overline{b}\overline{c};~ \overline{bc} = \overline{b} + \overline{c}$
\item $bc= cb$
\item $b + cd = (b+c)(b+d)$
\end{enumerate}

Any Boolean term $b$ satisfies $b \leq 1$.  Since $1^* = 1$ and $^*$ is monotonic, the KA axioms imply $b^* = 1$.  Note that any Kleene algebra can be viewed as a KAT with $\{0,1\}$ as the two-element Boolean subalgebra.

\subsection {Matrices and Automata}
\noindent The Kleene algebra axioms imply that the set of $n \times n$ matrices over a KA also forms a KA.  Addition and multiplication of matrices are defined in the usual way, 0 is interpreted as the $n \times n$ zero matrix, and 1 as $I_n$.  Equality and the partial order $\leq$ are defined componentwise. To define the star of an $n \times n$ matrix, we first define the star of a $2 \times 2$ matrix:

$$ \left[ 
\begin{array}{lr}
a & b \\
c & d  \\ \end{array}
\right]^*  =
\left[
\begin{array}{lr}
(a + bd^*c)^* & (a + bd^*c)bd^*\\
(d + ca^*b)ca^* & (d + ca^*b)^*\\ \end{array}
\right].
$$
 
\noindent We then extend this definition to arbitrary square matrices inductively.  Given a square matrix $E$, partition $E$ into four submatrices
$$ E = \left[ 
\begin{array}{l|r}
A & B \\ \hline
C & D  \\ \end{array}
\right]  
$$

\noindent such that $A$ and $D$ are square.  By induction, $A^*$ and $D^*$ exist.  Let $F = A + BD^*C$.  Then
$$ E^* = \left[ 
\begin{array}{c|c}
F^* & F^*BD^* \\ \hline
D^*CF^* & D^* + D^*CF^*BD^*  \\ \end{array}
\right].  
$$
\noindent It is a consequence of the KA axioms that any partition may be chosen to compute $E^*$.\\

In \cite{bib:co96}, it is shown that the set of $n \times n$ matrices over a Kleene algebra with tests is a Kleene algebra with tests.  The Boolean subalgebra is the set of matrices with Boolean terms on the diagonal and all other entries equal to 0.

At several points in the proof below, we will have to reason about non-square matrices.  We would like to know whether the 
theorems of Kleene algebra hold when the primitive letters are interpreted as matrices of arbitrary dimension and the function
symbols are treated polymorphically.  In general, the answer is no.  However, there is a large class of theorems for which this does hold, and
they suffice for our purposes.  See \cite{bib:ko98} for a thorough treatment of this issue.

We now recall how to use matrices over a KA to encode finite automata.
 
\newtheorem{definition}{Definition}
\begin{definition}
{\it An} automaton {\it over a Kleene algebra K is a triple $(u,A,v)$ where $u$ and $v$ are $n$-dimensional vectors with entries from 
$\{0,1\}$ and $A$ is an $n \times n$ matrix over $K$.  The vector $u$ encodes the start states of the automaton and is called the} start 
vector. {\it The vector $v$ encodes the accept states of the automaton and is called the} accept vector. {\it The matrix $A$ is called the}
transition matrix.  {\it The language accepted
by $(u,A,v)$ is $u^{\rm{T}}A^*v$.  The} size {\it of $(u,A,v)$ is the number of states, i.e., if $A$ is an $n \times n$ matrix,
then the size of $(u,A,v)$ is $n$}.
\end{definition}

This definition is a bit general for the purposes at hand.  Given an alphabet $\Sigma$, let $\mathcal{F}_{\Sigma}$ be the free Kleene algebra
on generators $\Sigma$.  Over $\mathcal{F}_{\Sigma}$, the definition of an automaton given above is essentially the same as the classical
definition of a finite automaton.  In the sequel, all automata are over some $\mathcal{F}_{\Sigma}$.  Furthermore, most of the automata we
consider have uncomplicated transition matrices.

\begin{definition}
{\it Let $(u,A,v)$ be an automaton over $\mathcal{F}_{\Sigma}$.  The automaton $(u,A,v)$ is} simple {\it if $A$ can be expressed as a sum

$$A = J + \sum_{a \in \Sigma} a \cdot A_a$$

\noindent where $J$ and each $A_a$ is a 0-1 matrix.

\noindent The automaton $(u,A,v)$ is} $\epsilon$-free {\it if $J$ is the zero matrix.

\noindent The automaton $(u,A,v)$ is} deterministic {\it if it is simple, $\epsilon$-free, and $u$ and all rows of each $A_a$ have exactly one 1.}

\end{definition}

Given an automaton $(u,A,v)$, we denote the transition relation encoded by $A$ as $\delta_A$, and the extended transition
relation defined on (states,words) as $\hat{\delta}_A$.  Given an $a \in \Sigma$, we denote the restriction of $\delta_A$ to only $a$-transitions by $\delta_A^a$.  For transition matrices $A,B,C$, we denote the underlying state sets of the automata by $\mathcal{A,B,C}$.  We now state the theorems of KA which we will use to reason about automata.

\section {Useful Theorems of KA}

The completeness result of \cite{bib:ko94} uses the fact that automata can be encoded as KA terms.  To simplify proofs, we add several theorems of Kleene algebra involving automata to our list of allowable rules of inference.  For each theorem we add, it will be clear that the hypotheses of the theorem are easy to check, so proofs constructed using these new rules of inference are verifiable in polynomial time.  Several of the theorems about automata are based on the following theorems of Kleene algebra:
  
$$(x + y)^* = x^*(yx^*)^*$$
$$ay = yb \rightarrow a^*y = yb^*$$
$$x(yx)^* = (xy)^*x.$$

These are known as the {\it denesting, bisimulation}, and {\it sliding} rules, respectively.  See \cite{bib:ko94} for a proof that these rules are consequences of the KA axioms.

We now provide an overview of {\it guarded string algebras}, which are models of the KAT axioms.  For a more detailed introduction, see \cite{bib:ko96}.   Guarded string algebras play the same role for KAT that regular languages do for KA; two KAT terms $t_1$ and $t_2$ are equivalent modulo the axioms of Kleene algebra with tests if and only if they denote the same set of guarded strings.

Let $P$ and $B$ be finite alphabets.  Elements of $P$ are called atomic programs, and elements of $B$ are called primitive tests (to distinguish them from atomic elements of the Boolean algebra generated by $B$).  Guarded strings are obtained from each word $w \in P^*$ by interspersing atoms of the free Boolean algebra on $B$ among the letters of $w$ (we require
that a guarded string both begins and ends with an atom).
Let $b_1,b_2,...,b_n$ be the elements of $B$.  Recall that an atom $\alpha$ of the free Boolean algebra on $B$ is a product of the form

$$\alpha = c_1 c_2 \cdots c_n$$

\noindent where $c_i \in \{b_i, \overline{b_i}\}$ for each $i$.  We require an ordering on the literals appearing in an atom so that there is a unique string denoting each atom.  Let $A_B$ denote the set of atoms.

Given a guarded string $x$, let first($x$) be the leftmost atom of $x$, and last($x$) be the rightmost atom of $x$.  We 
define a partial concatenation operation on guarded strings, denoted $\diamond$, as follows.  
Given two guarded strings, $x$ and $y$, let $x = x'\alpha$ and $y = \beta y'$, where $\alpha = $last($x$)  and $\beta = $ first($y$).

Define
$$ x \diamond y = x'\alpha y', \text{ if } \alpha = \beta, \text{ undefined otherwise}.$$

We now give interpretations of the KAT operations on sets of guarded strings. Let $C$ and $D$ be sets of guarded strings. Define
$$ 
\begin{array}{l}
C + D = C \cup D\\
C \cdot D = \{x \diamond y~|~x \in C,~y \in D\}\\
C^0 = A_B\\
C^* = \bigcup_{n \geq 0}~C^n.\\
\end{array}
$$

\noindent We must also interpret the complementation function.  Let $C$ be a set of guarded strings such that $C \subseteq A_B$.  Define
$$\overline{C} = A_B - C.$$

Using these operations, we can define a function $G$ from KAT terms to sets of guarded strings inductively.  The base cases are:
$$ 
\begin{array}{l}
G(0) = \emptyset\\
G(1) = \{\alpha~|~ \alpha \in A_B\}\\
G(b) = \{\alpha~|~ \alpha \rightarrow b \mbox{ is a propositional tautology}\}\\
G(p) = \{\alpha p \beta~|~ \alpha, \beta \in A_B\}.\\ 
\end{array}
$$

In \cite{bib:ko96}, the completeness of the guarded string model for the equational theory of KAT is shown by a reduction from the equational theory of KAT to the equational theory of KA.  This is achieved by transforming a KAT term $t$ into a KAT-equivalent term $t'$ such that $R(t')=G(t)$. Unfortunately, the term $t'$ may be exponentially longer than $t$.  We give an alternate construction. Given a term $t$, we construct an automaton $(u,A,v)$ such that $t = u^{\text{T}}A^*v$ modulo the axioms of KAT, and $(u,A,v)$ accepts precisely the set of guarded strings denoted by $t$.  The automaton $(u,A,v)$ will be polynomial in the size of $t$.

We need a few additional theorems of Kleene algebra in our construction.  The extra axioms satisfied by Boolean terms, particularly multiplicative idempotence and star-triviality, complicate the construction of the automaton.  We overcome these difficulties by selectively applying the Boolean axioms to Boolean terms.  That is, we first treat Boolean terms simply as words over an alphabet, and apply the lemmas below.  However, these lemmas produce automata which are not simple.  In the inductive construction in section 4.3 we then use the Boolean axioms to simplify the transition matrices.  Note, however, that the two lemmas below are theorems of Kleene algebra, and do not require the Boolean axioms.

\subsection{The KAT Concatenation Lemma}

The {\it KAT concatenation} lemma is based on the following alternate way of constructing an automaton accepting the concatenation of two languages.  The standard construction of such an automaton is to connect the accept states of the first automaton to the start states of the second with $\epsilon$-transitions.  However, we could also do the following:  for each state $i$ of $(u,A,v)$ with an outgoing $x$ transition to an accept state, and each state $j$ of $(s,B,t)$ with an incoming $y$ transition from a start state, add an $xy$ transition from $i$ to $j$.  Note that we allow $x$ and $y$ to be arbitrary elements of a Kleene algebra, not just letters in $\Sigma$.  
This construction yields an automaton accepting 
$u^{\rm{T}}A^*v s^{\rm{T}}B^*t$,
provided neither $(u,A,v)$ nor $(s,B,t)$ has a state which is both a start state and an accept state, which we can represent algebraically
as $u^{\rm{T}}v = 0, ~s^{\rm{T}}t = 0$.  This idea is the crux of the KAT concatenation lemma.  The lemma itself looks rather complicated, so we explain how it will be used.  In the construction in 5.2, we will have two $\epsilon$-free automata, 
$(u_1,A_1,v_1)$ and $(u_2,A_2,v_2)$.  Each of these automata will be the disjoint union of two automata:

$$(u_i,A_i,v_i) =
\left(
\left[
\begin{array}{l}
o_i \\ \hline
s_i \end{array}
\right],
\left[
\begin{array}{l|r}
C_i & 0\\ \hline
0 & B_i  \\ \end{array}
\right],
\left[
\begin{array}{l}
r_i \\ \hline
t_i \\ \end{array}
\right]
\right).
$$

\noindent It will be the case that neither of them accept the empty word, i.e.,
$$o_i^{\text{T}}r_i = 0$$
$$s_i^{\text{T}}t_i = 0$$

\noindent for $i = 1,2$.  The construction will require an automaton accepting 

$$L =(o_1^{\text{T}}C_1^*r_1s_2^{\text{T}}B_2^*t_2) +
(s_1^{\text{T}}B_1^*t_1o_2^{\text{T}}C_2^*r_2) + (s_1^{\text{T}}B_1^*t_1s_2^{\text{T}}B_2^*t_2).$$

\noindent Let $\Phi$ be a sequence of equations or equational implications.  The KAT concatenation lemma,

$$
\begin{array}{c}
\Phi \vdash o_1^{\text{T}}r_1 =0 ~~~~~~\Phi \vdash o_2^{\text{T}}r_2= 0~~~~~~\Phi \vdash s_1^{\text{T}}t_1 = 0~~~~~~~
\Phi \vdash s_2^{\text{T}}t_2 = 0 \\ \hline
\Phi \vdash
\left[
\begin{array}{l}
o_1 \\ 
s_1 \\ 
0 \\ 
0 \\ 
\end{array}
\right]^{\text{T}}
\left[
\begin{array}{l|r}
\begin{array}{lr}
C_1 & 0 \\ 
0 & B_1 \\ \end{array}
& 
\begin{array}{lr}
0 & C_1r_1 s_2^{\text{T}}B_2 \\ 
B_1t_1o_2^{\text{T}}C_2 & B_1t_1s_2^{\text{T}}B_2 \\ \end{array} 
\\ \hline
\begin{array}{lr}
0 ~~& ~~0 \\ 
0 ~~&~~ 0 \\ \end{array}
&
\begin{array}{lr}
C_2~~~~~~~~ &~~~~~~ 0 \\ 
0 ~~~~~~~~& ~~~~~~B_2 \\ \end{array}
\end{array}
\right]^*
\left[
\begin{array}{l}
0 \\ 
0 \\ 
r_2 \\ 
t_2 \\ \end{array}
\right] = L
\end{array}
$$

\noindent allows us to do this.

The proof is a straightforward calculation:
$$
\left[
\begin{array}{l}
o_1 \\ 
s_1 \\ 
0 \\ 
0 \\ 
\end{array}
\right]^{\text{T}}
\left[
\begin{array}{l|r}
\begin{array}{lr}
C_1 & 0 \\ 
0 & B_1 \\ \end{array}
& 
\begin{array}{lr}
0 & C_1r_1 s_2^{\text{T}}B_2 \\ 
B_1t_1o_2^{\text{T}}C_2 & B_1t_1s_2^{\text{T}}B_2 \\ \end{array} 
\\ \hline
\begin{array}{lr}
0 ~~& ~~0 \\ 
0 ~~&~~ 0 \\ \end{array}
&
\begin{array}{lr}
C_2~~~~~~~~ &~~~~~~ 0 \\ 
0 ~~~~~~~~& ~~~~~~B_2 \\ \end{array}
\end{array}
\right]^*
\left[
\begin{array}{l}
0 \\ 
0 \\ 
r_2 \\ 
t_2 \\ \end{array}
\right] =$$
$$ o_1^{\text{T}}C_1^*C_1r_1s_2^{\text{T}}B_2B_2^*t_2 + s_1^{\text{T}}B_1^*B_1t_1o_2^{\text{T}}C_2C_2^*r_2 + 
s_1^{\text{T}}B_1^*B_1t_1s_2^{\text{T}}B_2B_2^*t_2.$$

\noindent Using the hypotheses, it is easy to show that this sum is equal to $L$.  The proofs involved are of the following form:

$$o_1^{\text{T}}C_1^*r_1 = o_1^{\text{T}}(1 + C_1^*C_1)r_1$$
$$ = o_1^{\text{T}}r_1 + o_1^{\text{T}}C_1^*C_1r_1$$
$$ = o_1^{\text{T}}C_1^*C_1r_1.$$

\subsection{The KAT Asterate Lemma}

Let $(u,A,v)$ be a simple, $\epsilon$-free automaton and $\gamma$ be a regular expression.  Suppose $u^{\rm{T}}A^*v = \gamma$.  The standard construction of an automaton accepting
$\gamma\gamma^*$ proceeds by adding $\epsilon$-transitions from the accept states of $(u,A,v)$ back to its start states.  Suppose
$(u,A,v)$ has no paths of length 0 or 1 from a start state to an accept state, which we can model algebraically as $u^{\rm{T}}v = 0,
u^{\rm{T}}Av = 0$.  In this case, we can construct an automaton accepting $\gamma\gamma^*$ from $(u,A,v)$ with the following procedure:   for each state $i$ with an outgoing $x$ transition to an accept state, and each state $j$ with an incoming $y$ transition from a start state, add an $xy$ transition from $i$ to $j$.  This automaton, although not simple, accepts $\gamma\gamma^*$.  This idea is the basis of the 
{\it KAT asterate} lemma.

Suppose $(u,A,v)$ is the disjoint union of two automata, $(o,C,r)$ and $(s,B,t)$.  Also
suppose that $o^{\text{T}}C^*r \leq 1$, and $s^{\text{T}}t + s^{\text{T}}Bt = 0$, which implies $s^{\text{T}}B^*t = s^{\text{T}}B^*BBt$.  Under these conditions, we can apply the KAT asterate lemma:

$$
\begin{array}{c}
\Phi \vdash o^{\text{T}}C^*r \leq 1~~~~~~\Phi \vdash s^{\text{T}}B^*t = s^{\text{T}}B^*BBt\\ \hline
\Phi \vdash
\left(
\left[
\begin{array}{l}
o \\ \hline
s \end{array}
\right]^{\text{T}}
\left[
\begin{array}{l|r}
C & 0\\ \hline
0 & B\\ \end{array}
\right]^*
\left[
\begin{array}{l}
r \\ \hline
t \\ \end{array}
\right]
\right)^*
=
\left[
\begin{array}{l}
1 \\ \hline
s \end{array}
\right]^{\text{T}}
\left[
\begin{array}{l|r}
1 & 0\\ \hline
0 & B+Bts^{\text{T}}B \\ \end{array}
\right]^*
\left[
\begin{array}{l}
1 \\ \hline
t \\ \end{array}
\right] 
\end{array}.
$$

\noindent Note that $B + Bts^{\text{T}}B$ algebraically encodes the alternate asterate construction.

Since $(u,A,v)$ is the disjoint union of $(o,C,r)$ and $(s,B,t)$, it is easy to show that
$$u^{\text{T}}A^*v = o^{\text{T}}C^*r + s^{\text{T}}B^*t.$$

\noindent By KA axiom 10,

$$(u^{\text{T}}A^*v)^* = 1 + u^{\text{T}}A^*v(u^{\text{T}}A^*v)^*.$$

\noindent We can now substitute: 

$$ 1 + u^{\text{T}}A^*v(u^{\text{T}}A^*v)^* = 1 + (o^{\text{T}}C^*r + s^{\text{T}}B^*t)(o^{\text{T}}C^*r + s^{\text{T}}B^*t)^*.$$

\noindent By the denesting rule of Kleene algebra,
$$1 + (o^{\text{T}}C^*r + s^{\text{T}}B^*t)(o^{\text{T}}C^*r + s^{\text{T}}B^*t)^* =
1 + (o^{\text{T}}C^*r + s^{\text{T}}B^*t)(o^{\text{T}}C^*r)^*(s^{\text{T}}B^*t(o^{\text{T}}C^*r)^*)^*.$$

\noindent Since $o^{\text{T}}C^*r \leq 1, (o^{\text{T}}C^*r)^* = 1$.  We can simplify:

$$1 + (o^{\text{T}}C^*r + s^{\text{T}}B^*t)(o^{\text{T}}C^*r)^*(s^{\text{T}}B^*t(o^{\text{T}}C^*r)^*)^* =
1 + (o^{\text{T}}C^*r + s^{\text{T}}B^*t)(s^{\text{T}}B^*t)^*.$$

\noindent By distributivity and axiom 10 again,

$$1 + (o^{\text{T}}C^*r + s^{\text{T}}B^*t)(s^{\text{T}}B^*t)^* = 1 + s^{\text{T}}B^*t(s^{\text{T}}B^*t)^*.$$

At this point, we have shown that $u^{\text{T}}A^*v = 1 + s^{\text{T}}B^*t(s^{\text{T}}B^*t)^*$.  It remains to be shown
that under the assumption $s^{\text{T}}B^*t = s^{\text{T}}B^*BBt$,
$$~~~~~~~~~~s^{\text{T}}B^*t(s^{\text{T}}B^*t)^* = s^{\text{T}}(B + Bts^{\text{T}}B)^*t.~~~~~~~(1)$$

\noindent Reasoning algebraically,

$$s^{\text{T}}B^*t(s^{\text{T}}B^*t)^* = s^{\text{T}}B^*BBt(s^{\text{T}}B^*BBt)^*$$
$$= s^{\text{T}}B^*B(Bts^{\text{T}}B^*B)^*Bt$$
$$= s^{\text{T}}BB^*(Bts^{\text{T}}BB^*)^*Bt$$
$$= s^{\text{T}}B(B + Bts^{\text{T}}B)^*Bt.$$

\noindent The following equation is an easy consequence of the axioms of Kleene algebra:
$$(B + Bts^{\text{T}}B)^* = 1 + Bts^{\text{T}}B(B + Bts^{\text{T}}B)^* + (B + Bts^{\text{T}}B)^*Bts^{\text{T}}B + 
B(B + Bts^{\text{T}}B)^*B.$$

\noindent Multiplying the equation on the left by $s^{\text{T}}$, on the right by $t$, and simplifying using $s^{\text{T}}t=0$
and $s^{\text{T}}Bt = 0$ yields 

$$s^{\text{T}}(B + Bts^{\text{T}}B)^*t = s^{\text{T}}B(B + Bts^{\text{T}}B)^*Bt.$$

\noindent This proves (1).  We now add the trivial one-state automaton to the automaton $(s,B + Bts^{\text{T}}B,t)$, completing the proof of the KAT asterate lemma.

\section{KAT Term to Automaton}

In this section, we give the transducer which takes as input a KAT term $t$ and outputs an automaton accepting $G(t)$.  Before constructing the automaton, it must convert $t$ into a well-behaved form.

\subsection{Only Complement Primitive Tests}

The machine first uses the De Morgan laws and the Boolean axiom $\overline{\overline{b}} = b$ to transform a term $t$ into an equivalent term $t'$ in which the complementation symbol is only applied to atomic tests.  If we interpret $t'$ as a regular expression, then $R(t') \subseteq (P \cup B \cup \overline{B})^*$, where $\overline{B} = \{\overline{b}~|~ b \in B\}$.  The transducer works as follows.  On input $t$, it copies $t$ onto its worktape and onto the output tape.  Then, starting at the root of the syntax tree of $t$, it works it way down the tree until it finds a subtree containing only Boolean terms such that either some term is complemented twice, or a conjunction or disjunction appears under the complement symbol.  It then applies the appropriate axiom to this subtree, overwrites its worktape contents, and then outputs the updated term.  The machine then begins searching again at the root of the tree.  When it scans the whole tree and does not have to apply any axioms, it stops.  The transducer requires only polynomially many worktape cells.  Furthermore, the increase in the size of the term is negligible. At the end of this stage, it has $t'$ written on its worktape.

\subsection{New variables for atoms}

For the remainder of the construction, it is advantageous to treat each atom as a single letter.  Let $z = 2^{|B|}$.  The machine generates
$z$ many new variables, $x_1,x_2,...,x_z$.  For each $i$, it outputs the equation 
$$x_i = \alpha_i$$
\noindent where $\alpha_i$ is the $i^{\text{th}}$ atom.  The automaton constructed below uses the alphabet 
$P \cup \{x_1,x_2,...,x_z\}$.  It is a routine matter to verify that two KAT terms denote same set of guarded strings if and only if they denote the same set of words after performing this substitution.  For the rest of the construction, we use the terms ``guarded strings'' and  ``guarded strings after this substitution'' interchangeably.  

\subsection{Constructing the Automaton}

Now that the preprocessing of the term is complete, the machine constructs the automaton.  The construction is inductive and resembles the construction the proof of Kleene's theorem.  However, the machine will maintain several invariants throughout the construction which were not necessary in the pure Kleene algebra case. At a given substage, let $(u,A,v)$ be the final automaton constructed.  The automaton $(u,A,v)$ will satisfy:

\begin{itemize}
\item $(u,A,v)$ is simple and $\epsilon$-free.
\item $(u,A,v)$ is the ``disjoint union'' of two automata, $(o,C,r)$ and $(s,B,t)$, or just $(o,C,r)$, or just $(s,B,t)$.
\item $(s,B,t)$ accepts only words of length two or more, so., $s^{\text{T}}B^*t = s^{\text{T}}B^*BBt$.
\item $(o,C,r)$ is a two state automaton accepting only one-letter words from the alphabet $\{x_1,x_2,...,x_z\}$.
\item The first two states of $(u,A,v)$ are the states of $(o,C,r)$ (if $(o,C,r)$ is nonempty).
\end{itemize}

The base case of the induction is as follows.  For an atomic term $a$, $\hat{a}$ denotes the automaton constructed.  For an atom $x_i$
and a primitive test $b$, $x_i \leq b$ means that $x_i \rightarrow b$ is a propositional tautology.
$$
\begin{array}{l}
\hat{0} = (0,0,0)\\
\\
\hat{1} = 
\left(
\left[
\begin{array}{c}
1 \\
0 \\ \end{array}
\right],
\left[
\begin{array}{lr}
0 & \sum_i x_i \\
0 & 0  \\ \end{array}
\right],
\left[
\begin{array}{c}
0 \\
1 \\ \end{array}
\right]
\right)\\
\\
\hat{b} = 
\left(
\left[
\begin{array}{c}
1 \\
0 \\ \end{array}
\right],
\left[
\begin{array}{lr}
0 & \sum_{x_i \leq b} x_i\\
0 & 0  \\ \end{array}
\right],
\left[
\begin{array}{c}
0 \\
1 \\ \end{array}
\right]
\right)\\
\\
\hat{p} =
\left(
\left[
\begin{array}{c}
1 \\
0 \\ 
0 \\
0 \\ \end{array}
\right],
\left[
\begin{array}{cccc}
0 & \sum_i x_i & 0 & 0 \\
0 & 0 & p & 0\\ 
0 & 0 & 0 & \sum_i x_i\\
0 & 0 & 0 & 0\end{array}

\right],
\left[
\begin{array}{c}
0 \\
0 \\
0 \\
1 \\ \end{array}
\right]
\right)\\
\end{array}
$$

\noindent For each automaton, the machine must prove that the language it accepts is KAT-equivalent to the appropriate atomic term.  There are
finitely many atomic terms, so the machine can store all of the necessary proofs in its finite control.  Note that this expansion increases the size of a term by only a constant amount, although the constant is exponential in $|B|$.  Cf. the proof that the Boolean algebra axioms entail all propositional tautologies.

We now treat the inductive step of the construction.  The easiest automaton to construct is that for addition.  Suppose we have two automata $(u_1,A_1,v_1)$ and $(u_2,A_2,v_2)$, such that $u_1^{\text{T}}A_1^*v_1 = \gamma$ and $u_2^{\text{T}}A_2^*v_2 = \delta$.
By induction, $(u_1,A_1,v_1)$ is the disjoint union of $(o_1,C_1,r_1)$ and $(s_1,B_1,t_1)$, and 
$(u_2,A_2,v_2)$ is the disjoint union of $(o_2,C_2,r_2)$ and $(s_2,B_2,t_2)$.  The machine first proves the equations

$$u_1^{\text{T}}A_1^*v_1 = o_1^{\text{T}}C_1^*r_1 + s_1^{\text{T}}B_1^*t_1$$
$$u_2^{\text{T}}A_2^*v_2 = o_2^{\text{T}}C_2^*r_2 + s_2^{\text{T}}B_2^*t_2.$$

\noindent It then outputs a proof that
$$\gamma + \delta = (o_1^{\text{T}}C_1^*r_1 + o_2^{\text{T}}C_2^*r_2) +  s_1^{\text{T}}B_1^*t_1 + s_2^{\text{T}}B_2^*t_2.$$

\noindent The machine can now construct a two-state automaton $(o,C,r)$ which accepts $(o_1^{\text{T}}C_1^*r_1 + o_2^{\text{T}}C_2^*r_2)$, then apply the addition construction from 4.1 to $(o,C,r),(s_1,B_1,t_1)$, and $(s_2,B_2,t_2)$.  This yields an automaton $(u,A,v)$ which satisfies the invariants and accepts $\gamma + \delta$.  Note that there are only finitely many possibilities for $(o_1,C_1,r_1)$ and 
$(o_2,C_2,r_2)$, so the machine can prove

$$o^{\text{T}}C^*r = o_1^{\text{T}}C_1^*r_1 + o_2^{\text{T}}C_2^*r_2$$

\noindent using data from its finite control.
 
The automaton for the product of two terms is more complicated.  Again, let $(u_1,A_1,v_1)$ and $(u_2,A_2,v_2)$ be two automata such that
$u_1^{\text{T}}A_1^*v_1 = \gamma$ and $u_2^{\text{T}}A_2^*v_2 = \delta$.
As in the case for addition, we use the fact that each of these automata is the disjoint union of two automata:

$$u_1^{\text{T}}A_1^*v_1 = o_1^{\text{T}}C_1^*r_1 + s_1^{\text{T}}B_1^*t_1$$
$$u_2^{\text{T}}A_2^*v_2 = o_2^{\text{T}}C_2^*r_2 + s_2^{\text{T}}B_2^*t_2.$$

\noindent The machine can output a proof of the equations

$$\gamma \delta = (o_1^{\text{T}}C_1^*r_1 + s_1^{\text{T}}B_1^*t_1)(o_2^{\text{T}}C_2^*r_2 + s_2^{\text{T}}B_2^*t_2)$$
$$ = (o_1^{\text{T}}C_1^*r_1o_2^{\text{T}}C_2^*r_2) +(o_1^{\text{T}}C_1^*r_1s_2^{\text{T}}B_2^*t_2) +
(s_1^{\text{T}}B_1^*t_1o_2^{\text{T}}C_2^*r_2) + (s_1^{\text{T}}B_1^*t_1s_2^{\text{T}}B_2^*t_2).$$

The term $(o_1^{\text{T}}C_1^*r_1o_2^{\text{T}}C_2^*r_2)$ is a sum of atoms after simplifying using the Boolean axioms.  The machine
can construct a two-state automaton $(o,C,r)$ accepting this sum.  Since there are only finitely many choices for
$o_1^{\text{T}}C_1^*r_1$ and $o_2^{\text{T}}C_2^*r_2$, all of the necessary proofs can be stored in the finite control of the machine.

Let $(s,B,t)$ be the automaton

$$
\left(
\left[
\begin{array}{l}
o_1 \\ 
s_1 \\ 
0 \\ 
0 \\ 
\end{array}
\right],
\left[
\begin{array}{l|r}
\begin{array}{lr}
C_1 & 0 \\ 
0 & B_1 \\ \end{array}
& 
\begin{array}{lr}
0 & C_1r_1 s_2^{\text{T}}B_2 \\ 
B_1t_1o_2^{\text{T}}C_2 & B_1t_1s_2^{\text{T}}B_2 \\ \end{array} 
\\ \hline
\begin{array}{lr}
0 ~~& ~~0 \\ 
0 ~~&~~ 0 \\ \end{array}
&
\begin{array}{lr}
C_2~~~~~~~~ &~~~~~~ 0 \\ 
0 ~~~~~~~~& ~~~~~~B_2 \\ \end{array}
\end{array}
\right],
\left[
\begin{array}{l}
0 \\ 
0 \\ 
r_2 \\ 
t_2 \\ \end{array}
\right]
\right).
$$

\noindent The machine first outputs proofs of the hypotheses of the KAT concatenation lemma.
It can then output

$$s^{\text{T}}B^*t = (o_1^{\text{T}}C_1^*r_1s_2^{\text{T}}B_2^*t_2) +
(s_1^{\text{T}}B_1^*t_1o_2^{\text{T}}C_2^*r_2) + (s_1^{\text{T}}B_1^*t_1s_2^{\text{T}}B_2^*t_2),$$

\noindent which follows from the KAT concatenation lemma.

The machine now constructs a simple automaton $(s,B',t)$ by simplifying the transition matrix for $(s,B,t)$ using the Boolean axioms and
outputs a proof of the equivalence of $(s,B,t)$ and $(s,B',t)$.  It then adds the automata $(o,C,r)$ and $(s,B',t)$ together to get $(u,A,v)$, and outputs a proof of the equation
$$u^{\text{T}}A^*v = \gamma\delta.$$

Finally, we come to the construction for $^*$.  Let $(u,A,v)$ be an automaton such that $u^{\text{T}}A^*v = \gamma$.  This automaton
is the disjoint union of two automata, $(o,C,r)$ and $(s,B,t)$ such that $(o,C,r)$ accepts a sum of atoms and $(s,B,t)$ accepts no words of length less than two.  The machine first outputs proofs that
$$o^{\text{T}}C^*r \leq 1$$
$$s^{\text{T}}B^*BBt = s^{\text{T}}Bt.$$

\noindent These facts follow from the Boolean axioms and the equation $s^{\text{T}}t + s^{\text{T}}Bt = 0$.

The machine can now output
$$
\left[
\begin{array}{l}
1 \\ \hline
s \end{array}
\right]^{\text{T}}
\left[
\begin{array}{l|r}
1 & 0\\ \hline
0 & B+Bts^{\text{T}}B \\ \end{array}
\right]^*
\left[
\begin{array}{l}
1 \\ \hline
t \\ \end{array}
\right]
= \gamma^*,
$$

\noindent which follows from the KAT asterate lemma.  Finally, the machine can apply the Boolean axioms to each entry of
$$
\left[
\begin{array}{l|r}
1 & 0\\ \hline
0 & B+Bts^{\text{T}}B \\ \end{array}
\right]
$$

\noindent to produce an equivalent simple, $\epsilon$-free transition matrix $D$ (1 becomes the sum of all atoms).  It can then output a proof of
$$
\left[
\begin{array}{l}
1 \\ \hline
s \end{array}
\right]^{\text{T}}
\text{{\Large D}}^*
\left[
\begin{array}{l}
1 \\ \hline
t \\ \end{array}
\right]
= \gamma^*.
$$

The proof that the automaton constructed for a term $t$ accepts precisely the guarded strings denoted by $t$ is a straightforward induction.

\section{Reducing the Hoare Theory of KA(T) to the Equational Theory of KA}

Finally, we make the simple observation that the reductions in \cite{bib:co93} and \cite{bib:ko96} don't significantly increase the size of the terms.

\newtheorem{theorem}{Theorem}
\begin{theorem}
{\it Proofs of equational implications in the Hoare Theory of KA(T) can be produced by a PSPACE transducer.}
\end{theorem}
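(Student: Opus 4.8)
The plan is to realize the PSPACE transducer as a composition of three stages, and to observe that only the last of them can incur more than a polynomial increase in the objects the machine must hold at once. Write the target implication as $r = 0 \rightarrow p = q$, where several premises $r_1 = 0, \ldots, r_k = 0$ have been collapsed into the single premise $(\sum_i r_i) = 0$. The three stages are: (i) reduce the Hoare implication to a single equation; (ii) apply the construction of Section 4 to turn that KA(T) equation into a claim about the equivalence of two automata over some $\mathcal{F}_{\Sigma}$; and (iii) invoke the completeness theorem of \cite{bib:ko94} to emit a KA proof that the two automata accept the same language. Stages (i) and (ii) enlarge the terms by only a polynomial amount, so the whole question reduces to carrying out stage (iii) in polynomial space.

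For stage (i) I would use the reductions of \cite{bib:co93} and \cite{bib:ko96}: the implication $r = 0 \rightarrow p = q$ is a theorem of KA(T) if and only if the single equation $p + uru = q + uru$ is, where $u = (\sum_{a} a)^*$ is the sum of all symbols occurring in the terms, starred. This substitution costs only polynomially many symbols. Crucially, a proof of the implication can be recovered from a proof of the equation with polynomial overhead: working under the hypothesis $r = 0$, the machine derives $uru = 0$ by substitution of equals for equals together with the axiom $0a = a0 = 0$, rewrites $p + uru$ to $p$ and $q + uru$ to $q$ using axiom 3, chains these with the equation to obtain $p = q$, and finally discharges the hypothesis with the deduction rule $\frac{\Phi, e \vdash \phi}{\Phi \vdash e \rightarrow \phi}$ of Section 2.1. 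The transducer emits this fixed skeleton and then hands control to the subproof establishing the equation.

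For stage (ii) I would feed the equation $p + uru = q + uru$ to the transducer of Section 4. For each side this produces a simple, $\epsilon$-free automaton $(u_i, A_i, v_i)$ together with a KA(T) proof that the side equals $u_i^{\text{T}} A_i^* v_i$, with the automaton polynomial in the size of the term. What remains is the purely Kleene-algebraic task of producing a proof of $u_1^{\text{T}} A_1^* v_1 = u_2^{\text{T}} A_2^* v_2$, which by the completeness theorem of \cite{bib:ko94} exists precisely when the two automata accept the same regular set; so the machine may first decide equivalence and, if it fails, halt.

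The hard part will be stage (iii): running the completeness construction of \cite{bib:ko94}---determinize each automaton by the subset construction, compare the results, and emit the accompanying KA proofs---in polynomial space, even though the determinized automata and hence the proofs may be exponentially large. The guiding observation is that a PSPACE transducer is free to write exponentially long output, so it is enough that every object resident on the work tape at any moment be of polynomial size. The subset construction has exactly this feature: each state of a determinized automaton is a subset of the polynomially many states of some $A_i$, hence recorded in polynomial space, and each transition, start condition, and accept condition is computable in polynomial space directly from $A_i$. Likewise, whether two such subset-states have equal residual language---the relation needed to match up the two determinized automata---is an automaton-equivalence question that is itself decidable in polynomial space on demand, so the minimal automaton can be referenced without ever storing its (possibly exponential) partition in full. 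Thus the machine can stream the determinized transition matrices, the proofs that they are KA-equivalent to the originals (built from the denesting, bisimulation, and sliding rules already admitted in Section 3 together with the fixpoint implications 12 and 13), and the proof that identifies the two determinized automata, recycling its work tape throughout. Since equivalence of regular expressions is already PSPACE-complete, polynomial space is also the best bound attainable, and combining this with the polynomial stages (i) and (ii) shows that the entire procedure is carried out by a PSPACE transducer.
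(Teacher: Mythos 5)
Your proposal is correct, and its first stage is precisely the paper's entire proof: Theorem 1 in the paper consists only of the observation that the Cohen/Kozen--Smith reduction $(r = 0 \rightarrow p = q) \leftrightarrow (p + uru = q + uru)$ carries negligible size overhead, with everything else delegated elsewhere. (A small point in your favor: your $u = (a_1 + \cdots + a_n)^*$ is the correct form of the reduction; the paper's unstarred $u = a_1 + \cdots + a_n$ appears to be a typo, since without the star the forward direction fails --- $a = 0 \rightarrow a = aa$ is a valid implication, yet $a + uau = aa + uau$ is not a KA theorem when $u = a$.) Where you differ from the paper is in scope: your stage (ii) is the paper's own Section 4, and your stage (iii) --- generating, in polynomial space, a KA proof that two polynomial-size automata are equivalent by streaming the subset construction and deciding state equivalence on demand --- is not proved in the paper at all but is exactly the content of the cited companion paper \cite{bib:wo08}, which the paper invokes as a black box (see Section 6). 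Your sketch of stage (iii) identifies the right principles (a PSPACE transducer may write exponentially long output provided every object resident on the worktape is polynomial; subset-states are polynomially describable; equivalence of residual languages is PSPACE-decidable on demand), but this is the load-bearing ingredient of the whole theorem and it remains a sketch: one must organize the bisimulation-style determinization and minimization proofs, whose individual lines are equations between exponentially large matrices, so that each line can be emitted with polynomial workspace, and that is the substantive technical work of \cite{bib:wo08}. In short, your route is more self-contained at the cost of re-deriving (in outline) the hardest piece, whereas the paper's proof is short because it is purely modular: the Hoare-theory reduction composed with Section 4 and the already-established PSPACE proof generation for automata equivalence.
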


\begin{proof}  Given an alphabet $\Sigma = \{a_1,a_2,...,a_n\}$, let $u = a_1 + a_2 + \cdots + a_n$.  In \cite{bib:co93}, it is shown that

$$s \equiv t \Leftrightarrow s + uru = t + uru$$

\noindent is a Kleene algebra congruence, therefore $(r = 0 \rightarrow p = q) \leftrightarrow (p + uru = q + uru)$.  The same reduction works for KAT, as is shown in \cite{bib:ko96} - in this case $u$ is only defined to be the sum of all of the atomic programs, not the atomic tests.  The transformation from $r = 0 \rightarrow p = q$ to $p + uru = q + uru$ involves only a constant increase in size.
\end{proof}

\section{Deterministic {\tt while} Programs}

Let $P$ be a set of atomic programs, and $B$ be a set of atomic tests.  In \cite{bib:ko97}, it is shown how to encode deterministic {\tt while} programs as KAT terms:
$$p;q = pq$$
$${\bf if}~b~ {\bf then}~p ~{\bf else}~q = bp + \overline{b}q$$
$${\bf if}~b~ {\bf then}~p = bp + \overline{b}$$
$${\bf while}~b~{\bf do}~p = (bp)^*\overline{b}.$$
Let $t$ be a KAT term which is an encoding of a deterministic ${\tt while}$ program.  Let $g$ be a guarded string over $(P \cup A_B)$.  It is easy to see that the automaton $(u,A,v)$ constructed from $t$ in section 4 satisfies the following:
\begin{itemize}
\item There is only one start state $s$ of $(u,A,v)$ with an outgoing transition labeled by an atom $x$ such that first$(g) =x$.
\item $|\hat{\delta}_A(s,g)| \leq 1$.
\end{itemize}

Therefore, when considering the deterministic automaton $(s,D,t)$ obtained from $(u,A,v)$ by the standard subset construction, all states of $(s,D,t)$ corresponding to more than one state of $(u,A,v)$ are inaccessible.  This implies that, given two KAT terms $t_1$ and $t_2$, using the above procedure to construct automata for each term and then using the procedure in \cite{bib:wo08} to generate proofs of equivalence of the automata yields proofs which are only polynomial in $|t_1| + |t_2|$.


\begin{thebibliography}{99}

\bibitem[1]{bib:be03} Berghammer, R., M\"{o}ller, B. and Struth, G. (Eds.)
\textit{Relational and Kleene-Algebraic Methods in Computer Science}, May 2003.

\bibitem[2]{bib:co93} Cohen, Ernie.
\text{Hypotheses in Kleene Algebra.  Technical Report TM-ARH-023814, Bellcore, 1993.}
\text{http://citeseer.nj.nec.com/1688.html} 

\bibitem[3]{bib:co96} Cohen, E and Kozen, D. and Frederick, S.
\text{The Complexity of Kleene Algebra with Tests.}
\textit{Technical Report 96-1598, Computer Science Department, Cornell University.}  July 1996. 

\bibitem[4]{bib:ko94} Kozen, D. 
\text{A Completeness Theorem for Kleene Algebras and the Algebra of Regular Events}.
\textit{Infor. and Comput}, 110(2):366-390. May 1994.

\bibitem[5]{bib:ko96} Kozen, D. and Smith, Frederick.
\text{Kleene Algebra with Tests: Completeness and Decidability}.
\textit{Proc. 10th Int. Workshop Computer Science Logic (CSL' 96)} 224-259. 1996.

\bibitem[6]{bib:ko97} Kozen, D.
\text{Kleene Algebra with Tests}.
\textit{Transactions on Programming Languages and Systems} 19:427-443.  May 1997.

\bibitem[7]{bib:ko98} Kozen, D.
\text{Typed Kleene Algebra}.
\textit{Technical Report 98-1669, Computer Science Department, Cornell University}. March 1998.

\bibitem[8]{bib:ko00} Kozen, D.
\text{On Hoare Logic and Kleene Algebra with Tests}.
\textit{Trans. Computational Logic}, 1(1):60-76, July 2000.

\bibitem[9]{bib:ko01} Kozen, D. and Tiuryn, Jerzy.
\text{On the Completeness of Propositional Hoare Logic}.
\textit{Information Sciences}, 139:187-195, 2001.

\bibitem[10]{bib:se72} Selman, A.
\text{Completeness of Calculii for Axiomatically Defined Classes of Algebras}.
\textit{Algebra Universalis}, 2:20-32, 1972.

\bibitem[11]{bib:wo08} Worthington, J.
\text{Automatic Proof Generation in Kleene Algebra}
\textit{Proceedings of RelMics10/AKA5}, April 2008 (to appear).
\end{thebibliography}
\end{document}